\newtheorem{theorem}{Theorem}[section]
\newtheorem{lemma}[theorem]{Lemma}
\numberwithin{equation}{section}
\newcommand{\D}{\Delta}
\newcommand{\ra}{\rightarrow}
\newcommand{\f}{\frac}
\renewcommand{\l}{\lambda}
\newcommand{\be}{\begin{equation}}
\renewcommand{\ra}{\rightarrow}
\newcommand{\ee}{\end{equation}}
\newcommand{\bea}{\begin{eqnarray}}
\newcommand{\eea}{\end{eqnarray}}
\newcommand{\bna}{\begin{eqnarray*}}
\newcommand{\ena}{\end{eqnarray*}}
\renewcommand{\o}{\omega}
\newcommand{\iv}{\int_{V}}
\newcommand{\me}{\mathrm{e}}
\renewcommand{\le}{\left}
\newcommand{\ri}{\right}
\newcommand{\ve}{\vert}
\newcommand{\V}{\Vert}
\newcommand{\na}{\nabla}
\newcommand{\up}{\upsilon}
\journal{***}
\begin{document}
   
\begin{frontmatter}

\title{ Existence and asymptotic  behaviors of solutions to  Chern-Simons systems and equations  on finite graphs}

\author{Songbo Hou $^a$\corref{cor1}}
\ead{housb@cau.edu.cn}

\author{Xiaoqing Kong$^b$}
\ead{kxq@cau.edu.cn}

\address{$^{a,b}$Department of Applied Mathematics, College of Science, China Agricultural University,  Beijing, 100083, P.R. China}
\cortext[cor1]{Corresponding author: Songbo Hou}

\begin{abstract}

In this paper, we investigate a system of equations derived from the $\text{U}(1)\times \text{U}(1)$ Abelian Chern-Simons model:
\begin{eqnarray*}\left\{\begin{aligned}
		\Delta u &=\lambda\left(a(b-a)\mathrm{e}^u-b(b-a)\mathrm{e}^{\upsilon}+a^2\mathrm{e}^{2u}-ab\mathrm{e}^{2\upsilon}+b(b-a)\mathrm{e}^{u+\upsilon} \right)+4\pi\sum\limits_{j=1}^{k_1}m_j\delta_{p_j},\\
		\Delta \upsilon&=\lambda\left(-b(b-a)\mathrm{e}^u+a(b-a)\mathrm{e}^{\upsilon}-ab\mathrm{e}^{2u}+a^2\mathrm{e}^{2\upsilon}+b(b-a)\mathrm{e}^{u+\upsilon} \right)+4\pi\sum\limits_{j=1}^{k_2}n_j\delta_{q_j},
	\end{aligned}
	\right.
\end{eqnarray*}
on finite graphs. Here, $\lambda>0$, $b>a>0$, $m_j>0\, (j=1,2,\cdot\cdot\cdot,k_1)$, $n_j>0\,(j=1,2,\cdot\cdot\cdot,k_2)$, and $\delta_{p}$ denotes the Dirac delta mass at vertex $p$. We establish an iteration scheme and prove the existence of solutions. Additionally, we propose a novel method to derive the asymptotic behavior of solutions as $\lambda$ approaches infinity. This method is also applicable to the Chern-Simons system:
$$\left\{\begin{aligned}
	\Delta u &=\lambda\mathrm{e}^{\upsilon}(\mathrm{e}^{u}-1)+4\pi\sum\limits_{j=1}^{k_1}m_j\delta_{p_j},\\
	\Delta \upsilon&=\lambda\mathrm{e}^{u}(\mathrm{e}^{\upsilon}-1)+4\pi\sum\limits_{j=1}^{k_2}n_j\delta_{q_j},
\end{aligned}
\right.
$$
and the classical Chern-Simons equation:
\begin{eqnarray*} \Delta u=\lambda \mathrm{e}^u(\mathrm{e}^u-1)+4\pi\sum\limits_{j=1}^{N}\delta_{p_j}.
\end{eqnarray*}

\end{abstract}
\begin{keyword}  finite graph\sep Chern-Simons model\sep sub-solution, asymptotic  behavior
\MSC [2020] 35J47, 05C22
\end{keyword}
\end{frontmatter}
\section{Introduction}

In this paper, we explore an elliptic system emerging from the $\text{U}(1)\times \text{U}(1)$ Abelian Chern-Simons model \cite{kim1993schrodinger,wilczek1992disassembling}, given by
\be\label{cso}\left\{\begin{aligned}
	\Delta u &=\l\le(a(b-a)\me^u-b(b-a)\me^{\up}+a^2\me^{2u}-ab\me^{2\up}+b(b-a)\me^{u+\up} \ri)+4\pi\sum\limits_{j=1}^{k_1}m_j\delta_{p_j},\\
	\Delta \up&=\l\le(-b(b-a)\me^u+a(b-a)\me^{\up}-ab\me^{2u}+a^2\me^{2\up}+b(b-a)\me^{u+\up} \ri)+4\pi\sum\limits_{j=1}^{k_2}n_j\delta_{q_j},
\end{aligned}
\right.
\ee
	on finite graphs. In this system, $\l>0$, $b>a>0$, $m_j>0,(j=1,2,\cdot\cdot\cdot,k_1)$, $n_j>0,(j=1,2,\cdot\cdot\cdot,k_2)$, and $\delta_{p}$ represents the Dirac delta mass at vertex $p$.
	
	Chern-Simons models provide explanations for various physical phenomena, such as aspects of particle physics, condensed matter physics, high-temperature superconductors, and the quantum Hall effect. Many studies have investigated system (\ref{cso}) within a domain $\Omega\subset \mathbb{R}^2$. Lin and Prajapat \cite{lin2009vortex} demonstrated the existence of a unique maximal solution and a mountain-pass type solution on a torus for the case $(a,b)=(0,1)$. For $a>b>0$ in (\ref{cso}), readers can refer to \cite{MR3198855,MR3168626}. Huang \cite{huang2021vortex} verified the existence of topological solutions, which represent the maximal solution to (\ref{cso}), in the case of $b>a>0$. Additional findings can be found in \cite{MR3390935,MR3592753,MR3543183} and the references contained therein.

Recently, there have been exciting developments in the study of elliptic equations with exponential nonlinearity on graphs. Grigor'yan et al. \cite{grigor2016kazdan} confirmed the existence of the Kazdan-Warner equation, denoted as

\be \label{kwe}\D u=c-h\me^u\ee
on graphs. This conclusion was derived through the application of variational calculus and a method involving upper and lower solutions. Ge \cite{MR3648273} re-investigated the Kazdan-Warner equation (\ref{kwe}) on graphs, specifically in the negative case, thereby augmenting the results in \cite{grigor2016kazdan}. Keller and Schwarz \cite{MR3776360} explored the Kazdan-Warner equation on canonically compactifiable graphs, deriving results similar to those found in finite cases. The $p$th Kazdan-Warner equation on graphs,

\be \label{kwp}\D_p u=c-h\me^u\ee
was studied in \cite{ge2020p,zhang2018p}. In the context of an infinite graph, the existence of a solution to the Kazdan-Warner equation

\be\label{kwi}
\D f=g-h\me^f
\ee
was proven using a heat flow method \cite{ge2018kazdan}. Using a variational method, Liu and Yang \cite{liu2020multiple} obtained multiple solutions to the Kazdan-Warner equation

\be \label{kwn}
\D u+\kappa+K_{\l }\me^u=0
\ee
on graphs in the negative case. Sun and Wang \cite{MR4416135} defined the Brouwer degree and offered new proofs for certain known existence results regarding Kazdan-Warner equations on finite graphs.

Turning to the Chern-Simons equations, Huang et al. \cite{huang2020existence} analyzed the equation

\be\label{csi1} \D u=\l \me^u(\me^u-1)+4\pi\sum\limits_{j=1}^{N}\delta_{p_j}\ee
on a finite graph, where $\l>0$ is a constant and $\delta_{p}$ represents the Dirac delta mass at vertex $p$. They successfully proved the existence of solutions for the non-critical case. The first author and Sun \cite{hou2022existence} examined the critical case for equation (\ref{csi1}) and also considered a more general Chern-Simons equation, a topic that was also addressed by L{\"u} and Zhong \cite{lu2021existence}.
Huang et al. \cite{huang2021mean} further probed into the existence of maximal solutions to a Chern-Simons system, establishing the existence of multiple solutions as well. For additional results, please refer to \cite{CHAO2023126787,chao2022existence, gao2022existence}.

Let $V$ represent the vertex set, and $E$ the edge set. A finite graph can be denoted as $G=(V,E)$. We suppose that $G$ is connected, signifying that any pair of vertices can be connected through a finite number of edges. The weight on the edge $xy\in E$ is defined by $\o_{xy}$, which is assumed to be symmetric, that is, $\o_{xy}=\o_{yx}$.
Denote by $\mu :V\ra \mathbb{R}^{+}$ a finite measure. We define the $\mu$-Laplace operator for any function $u:V\ra \mathbb{R}$ by
$$\D u(x)=\f{1}{\mu(x)}\sum\limits_{y\sim x}\omega_{xy}(u(y)-u(x)),$$
where $y\sim x$ implies that $xy\in E$. For any two functions $u$ and $\up$, the gradient form is defined by
\be\label{lpd}\Gamma(u,\up)=\f{1}{2\mu(x)}\sum\limits_{y\sim x}\omega_{xy}(u(y)-u(x))(\up(y)-\up(x)).\ee
When $u=\up$, we denote it as $\Gamma(u)=\Gamma(u,u)$. Define
$$\ve \na u\ve (x) =\sqrt {\Gamma (u)(x)}=\le( \f{1}{2\mu(x)}\sum\limits_{y\sim x}\omega_{xy}\le(u(y)-u(x)\ri)^2\ri)^{\f{1}{2}}.$$  
The integral over $V$ is defined by
$$\int_V ud\mu=\sum\limits_{x\in V}\mu(x)u(x),$$ 
applicable for any function $u:V\ra \mathbb{R}$.

In a manner similar to the Euclidean case, we introduce the Sobolev space 

$$W^{1,2}(V)=\le\{u\,\Big|\, u:V\rightarrow \mathbb{R},\,\iv\le(\ve \na u\ve^2 +u^2\ri)d\mu<+\infty\ri\}$$ characterized by the norm $$\V u\V_{W^{1,2}(V)}=\le(\iv\le(\ve \na u\ve^2 +u^2\ri)d\mu\ri)^{1/2}.$$ 
We denote by $V^{\mathbb{R}}$ the set of all real functions that map $V$ to $\mathbb{R}$, that is, $V^{\mathbb{R}}=\{u\,|\,u \,\text{is a real function}: V\ra\mathbb{R}\}$. Given the finiteness of $G$, we can deduce that $W^{1,2}(V)$ is synonymous with $V^{\mathbb{R}}$.

 We first consider the system (\ref{cso}) on a finite graph $G$.  Set \be\left\{\begin{aligned}
 	f_1(u,\up ) &=a(b-a)\me^u-b(b-a)\me^{\up}+a^2\me^{2u}-ab\me^{2\up}+b(b-a)\me^{u+\up} ,\\
 	f_2(u,\up)&=-b(b-a)\me^u+a(b-a)\me^{\up}-ab\me^{2u}+a^2\me^{2\up}+b(b-a)\me^{u+\up}.
 \end{aligned}
 \right.
 \ee
 Then we rewrite (\ref{cso}) as
 \be\label{csr}\left\{\begin{aligned}
 	\Delta u &=\l f_1(u,\up)+4\pi\sum\limits_{j=1}^{k_1}m_j\delta_{p_j},\\
 	\Delta \up&=\l f_2(u,\up)+4\pi\sum\limits_{j=1}^{k_2}n_j\delta_{q_j}.
 \end{aligned}
 \right.
 \ee
 Consider the system 
 \be\label{gfu}\left\{\begin{aligned}
 	\Delta u &=-\f{4\pi N_1}{\ve V\ve }+4\pi\sum\limits_{j=1}^{k_1}m_j\delta_{p_j},\\
 	\Delta \up&=-\f{4\pi N_2}{\ve V\ve }+4\pi\sum\limits_{j=1}^{k_2}n_j\delta_{q_j},
 \end{aligned}
 \right.
 \ee
 where $N_1=\sum_{j=1}^{k_1} m_j$, $N_2=\sum_{j=1}^{k_2} n_j$, and $\ve V\ve$ is the volume of $V$, i.e., $\ve V\ve=\int_V d\mu$. 

Indeed, when we examine orthogonality in terms of the standard scalar product, expressed as $\langle f,g\rangle=\int_v fgd\mu $, we derive the following relationship: $$\text {\rm ran}\Delta =\left( \text{\rm ker}\Delta\right)^\perp=\{\text{\rm const}\}^\perp.$$
Given that $\delta_{p}$ can be equated to a function that yields $\f{1}{\mu(p)}$ at point $p$ and zero elsewhere, and considering that the integral of the right-hand elements in system (\ref{gfu}) amounts to zero, we can deduce that system (\ref{gfu}) consistently provides a solution. Similar lines of reasoning can be applied to system  (\ref{ncs}) and equation (\ref{up1}).

Let $(u_0,\up_0)$ be a solution to (\ref{gfu}).
  If $(\hat{u}, \hat{\up})$ is a solution to (\ref{csr}), setting $(u,\up)=(\hat{u}, \hat{\up})-(u_0,\up_0)$, we see that 
 \be\label{che}\left\{\begin{aligned}
 	\Delta u &=\l f_1(u+u_0,\up+\up_0)+\f{4\pi N_1}{\ve V\ve },\\
 	\Delta \up&=\l f_2(u+u_0,\up+\up_0)+\f{4\pi N_2}{\ve V\ve }.
 \end{aligned}
 \right.
 \ee
 We get the following theorem. 
\begin{theorem}\label{tho}
Assume that $b>a>0$. 	There is 
	$\l_0>0$ 
	such that if $\l>\l_0$,
	\begin{enumerate}[(1)]
		\item  The system  (\ref{che}) yields a maximal solution pair $(u_{\l}, \up_{\l})$, interpreted in such a way that if $(u,\up)$ represents any other solution, it follows that $u\leq u_{\l}$ and $\up \leq \up_{\l}$ in  $V$.
		\item   As $\l\ra \infty$, $(u_{\l}, \up_{\l})\ra (-u_0,-\up_0)$.
		
		\item As $\l\ra \infty$, $\l f_1(u_{\l}+u_0,\up_{\l }+\up_0)\ra - 4\pi\sum\limits_{j=1}^{k_1}m_j\delta_{p_j},$ \;\;\;$\l f_2(u_{\l}+u_0,\up_{\l }+\up_0)\ra  -4\pi\sum\limits_{j=1}^{k_2}n_j\delta_{q_j}.$ 
	\end{enumerate}
\end{theorem}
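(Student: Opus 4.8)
\noindent\emph{Strategy and barriers.} The plan is to build the maximal solution of $(\ref{che})$ by a monotone iteration trapped between two explicit barriers, and then to read off $(2)$--$(3)$ from the barriers. Write $g_i(u,\up)=\l f_i(u+u_0,\up+\up_0)+\tfrac{4\pi N_i}{\ve V\ve}$ ($i=1,2$, with $N_1,N_2$ as in $(\ref{gfu})$), so that $(\ref{che})$ reads $\D u=g_1(u,\up),\ \D\up=g_2(u,\up)$. One checks directly that $f_1(0,0)=f_2(0,0)=0$ and $f_1(t,t)=f_2(t,t)=(b-a)^2\me^{t}(\me^{t}-1)$. Using $(\ref{gfu})$, $\D(-u_0)-g_1(-u_0,-\up_0)=\D(-u_0)-\tfrac{4\pi N_1}{\ve V\ve}=-4\pi\sum_j m_j\d_{p_j}\leq0$, and likewise for the $\up$-equation; hence $(\bar u,\bar\up)=(-u_0,-\up_0)$ is a super-solution for \emph{every} $\l>0$. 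For $\ep>0$ put $(\underline u,\underline\up)=(-u_0-\ep,-\up_0-\ep)$; then $\D\underline u-g_1(\underline u,\underline\up)=\D(-u_0)-\l f_1(-\ep,-\ep)-\tfrac{4\pi N_1}{\ve V\ve}=\l(b-a)^2\me^{-\ep}(1-\me^{-\ep})-4\pi\sum_j m_j\d_{p_j}$, which is $\geq0$ at every vertex provided $\l(b-a)^2\me^{-\ep}(1-\me^{-\ep})\geq4\pi\max_j m_j/\mu(p_j)$ (and the analogous bound with $n_j,q_j$). Thus for each admissible pair $(\ep,\l)$ — meaning $\l\geq\l(\ep)$, where $\l(\ep)\to\infty$ as $\ep\to0$ — the pair $(\underline u,\underline\up)\leq(\bar u,\bar\up)$ is a sub-solution; let $\l_0$ be the least $\l$ admitting some $\ep$.

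\noindent\emph{The iteration and $(1)$.} Fix $K>C_1\l$ with $C_1=a(b-a)+2a^2+b(b-a)$, and on the finite-dimensional space of functions on $V$ set $(u^{(0)},\up^{(0)})=(\bar u,\bar\up)$ and, for $n\geq0$, let $(u^{(n+1)},\up^{(n+1)})$ be the unique solution of the linear, negative-definite (hence invertible) system $(\D-K)u^{(n+1)}=g_1(u^{(n)},\up^{(n)})-Ku^{(n)}$, $(\D-K)\up^{(n+1)}=g_2(u^{(n)},\up^{(n)})-K\up^{(n)}$. The algebraic fact driving the scheme is $\p_\up f_1(s,t)=b\me^{t}\big[(b-a)(\me^{s}-1)-2a\me^{t}\big]<0$ for all $s\leq0$, and by the symmetry $f_2(s,t)=f_1(t,s)$ also $\p_u f_2(s,t)<0$ for all $t\leq0$; combined with $\p_u f_1,\p_\up f_2\leq C_1$ on $(-\infty,0]^2$, the choice of $K$, the mean value theorem and the discrete maximum principle for $\D-K$, a routine induction yields $(\underline u,\underline\up)\leq(u^{(n+1)},\up^{(n+1)})\leq(u^{(n)},\up^{(n)})\leq(\bar u,\bar\up)$ for every $n$. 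Being monotone and bounded, the iterates converge to a solution $(u_\l,\up_\l)$ of $(\ref{che})$ with $-u_0-\ep\leq u_\l\leq-u_0$, $-\up_0-\ep\leq\up_\l\leq-\up_0$. Since the iteration is independent of $\ep$ and stays above the sub-solution for every admissible $\ep$, its limit dominates in particular every solution of $(\ref{che})$ lying below $(-u_0,-\up_0)$, so $(u_\l,\up_\l)$ is the maximal solution, proving $(1)$.

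\noindent\emph{Asymptotics and the main obstacle.} For $(2)$: for $\l>\l_0$ the maximal solution lies above $(\underline u,\underline\up)$ for the least admissible $\ep=\ep(\l)$, and $\l(\ep)\to\infty$ as $\ep\to0$ forces $\ep(\l)\to0$; hence $-\ep(\l)\leq u_\l+u_0\leq0$ and $-\ep(\l)\leq\up_\l+\up_0\leq0$, i.e. $(u_\l,\up_\l)\to(-u_0,-\up_0)$. For $(3)$: rewriting $(\ref{che})$ gives $\l f_1(u_\l+u_0,\up_\l+\up_0)=\D(u_\l+u_0)-4\pi\sum_j m_j\d_{p_j}$ and $\l f_2(u_\l+u_0,\up_\l+\up_0)=\D(\up_\l+\up_0)-4\pi\sum_j n_j\d_{q_j}$, and since $u_\l+u_0\to0$, $\up_\l+\up_0\to0$ and $\D$ acts continuously on functions on the finite graph, the right-hand sides tend to $-4\pi\sum_j m_j\d_{p_j}$ and $-4\pi\sum_j n_j\d_{q_j}$. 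The one genuinely delicate step is the iteration: unlike the classical scalar Chern--Simons equation, the system is \emph{not} cooperative near the vacuum ($\p_\up f_1<0$ there), so no off-the-shelf monotone-iteration result applies; what rescues the argument is precisely that the cross-derivatives carry exactly the sign needed for a \emph{decreasing} iteration, together with the a priori confinement $u_\l+u_0,\up_\l+\up_0\in[-\ep,0]$ furnished by the explicit barriers. Making these two ingredients interlock — and, for genuine maximality, checking by a maximum-principle argument (here $b>a$ is used) that every solution of $(\ref{che})$ is trapped below $(-u_0,-\up_0)$ — is where the real work lies; everything else is bookkeeping on the finite-dimensional space of functions on $V$.
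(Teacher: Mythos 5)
Your construction is essentially the paper's: the same relaxation scheme $(\D-K)u^{(n+1)}=\l f_1(u^{(n)}+u_0,\up^{(n)}+\up_0)-Ku^{(n)}+\frac{4\pi N_1}{\ve V\ve}$ started at $(-u_0,-\up_0)$, the same sign analysis of $\p_u f_1$ (bounded above by $(a+b)(b-a)+2a^2$ on $(-\infty,0]^2$) and of $\p_\up f_1<0$, and the same explicit sub-solutions $(-u_0-\ep,-\up_0-\ep)$ whose admissible $\ep$ shrinks to $0$ as $\l\to\infty$, giving (2) by squeezing. Your proof of (3) is in fact a small improvement: reading $\l f_i$ off the equation as $\D(u_\l+u_0)-4\pi\sum_j m_j\d_{p_j}$ and invoking continuity of $\D$ on the finite-dimensional space is more direct than the paper's duality argument against test functions $\phi$.

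The genuine gap is maximality in (1). Your induction shows only that the iterates dominate a sub-solution that is \emph{already known} to lie below $(-u_0,-\up_0)$: the base case is trivial for the explicit barrier $(-u_0-\ep,-\up_0-\ep)$, but to conclude that $(u_\l,\up_\l)$ dominates every solution you must first show that every solution --- more generally, every sub-solution $(u_-,\up_-)$ of (\ref{che}) --- satisfies $u_-<-u_0$ and $\up_-<-\up_0$. You correctly flag this as ``where the real work lies'' and then supply no argument; it is not a routine maximum-principle step precisely because, as you note, the system is not cooperative. The paper's proof evaluates the sub-solution inequality at a maximum point $x_u$ of $u_-+u_0$ and, in the case $(u_-+u_0)(x_u)>0$ with $(u_-+u_0)(x_u)\geq(\up_-+\up_0)(x_u)$, uses the factorization
$$f_1(s,t)=\bigl(a(b-a)+ab\me^{t}+a^2\me^{s}\bigr)(\me^{s}-\me^{t})+(b-a)^2\me^{t}(\me^{s}-1),$$
whose two terms are respectively $\geq 0$ and $>0$ in that regime, to contradict $\D(u_-+u_0)(x_u)\leq 0$; the symmetric identity for $f_2$ handles the case where $\up_-+\up_0$ has the larger positive maximum, and a connectedness argument disposes of the borderline case $(u_-+u_0)(x_u)=0$. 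Without this identity (or an equivalent device exploiting $b>a$) the assertion that $(u_\l,\up_\l)$ is the maximal solution remains unproven; the rest of your proposal matches the paper and is correct.
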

Next, we consider the Chern-Simons system 

\be\label{ncs}\left\{\begin{aligned}
	\Delta u &=\lambda\mathrm{e}^{\up}(\mathrm{e}^{u}-1)+4\pi\sum\limits_{j=1}^{k_1}m_j\delta_{p_j},\\
	\Delta \up&=\lambda\mathrm{e}^{u}(\mathrm{e}^{\up}-1)+4\pi\sum\limits_{j=1}^{k_2}n_j\delta_{q_j},
\end{aligned}
\right.
\ee  
on a finite graph $G$. 
If $(\tilde{u},\tilde{\up})$ is a solution to (\ref{ncs}), letting $(u,\up)=(\tilde{u},\tilde{\up})-(u_0,\up_0)$, then we obtain 
 \be\label{css}\left\{\begin{aligned}
	\Delta u &=\l \me^{\up_0+\up}(e^{u_0+u}-1)+\f{4\pi N_1}{\ve V\ve },\\
	\Delta \up &=\l \me^{u_0+u}(e^{\up_0+\up}-1)+\f{4\pi N_2}{\ve V\ve }.
\end{aligned}
\right.
\ee
Huang et al.  \cite{huang2021mean} proved that there is a critical value $\l_*\geq \frac{4\pi\max\{N_1,N_2\}}{\ve V\ve}$ such that if $\l>\l_*$, the system  (\ref{css}) has a maximal solution $(u_{\l}, \up_{\l})$. 

We prove the following theorem. 
\begin{theorem}\label{thw}
We denote by  $(u_{\l}, \up_{\l})$  the maximal solution of the system  (\ref{css}). This is underscored by the condition that for every alternative solution $(u,\up)$, it holds true that $u\leq u_{\l}$ and $\up \leq \up_{\l}$ in  $V$.  Then we have 
		\begin{enumerate}
		
		\item    $(u_{\l}, \up_{\l})\ra (-u_0,-\up_0)$, as $\l\ra \infty$.
		
		\item  $\l \me^{\up_0+\up}(e^{u_0+u}-1)\ra - 4\pi\sum\limits_{j=1}^{k_1}m_j\delta_{p_j},$ \;\;\;$\l \me^{u_0+u}(e^{\up_0+\up}-1)\ra  -4\pi\sum\limits_{j=1}^{k_2}n_j\delta_{q_j},$  as $\l\ra \infty$.
	\end{enumerate}
\end{theorem}

Let $\bar{u}_0$ be a solution to 
\be \label{up1}\D u= -\f{4\pi N}{\ve V\ve}+4\pi \sum\limits_{j=1}^{N}\delta_{p_j}, \ee on a finite graph $G$.
 If $\hat{u}$ is a solution to (\ref{csi1}), writing $\hat{u}=\bar{u}_0+u$, then we get 
\be\label{mfe}
\D u=\l \me^{\bar{u}_0+u} (\me^{\bar{u}_0+u}-1)+\f{4\pi N}{\ve V\ve}.
\ee
The results in \cite{huang2020existence,hou2022existence} show that there exists a critical value $\l_c\geq\f{16\pi N}{\ve V\ve }$ such that $(\ref{mfe})$ admits a maximal solution if and only if $\l\geq \l_c$, where $\ve V\ve$ is the volume of $V$.

Analogous to Theorem \ref{tho}, we get the result for (\ref{mfe}). 
\begin{theorem}\label{tht}
Denote by $u_{\l }$ the maximal  solution to (\ref{mfe}) for $\l>\l_c$ in the sense that  if $u$ is any other solution, then 
$u\leq u_{\l}$ in $V$.  There holds that 
\begin{enumerate}[(1)]
	\item  $u_{\l}\ra -\bar{u}_0$ as $\l\ra \infty$.

	\item  $\l \me^{\bar{u}_0+u_{\l}}(\me^{\bar{u}_0+u_{\l}}-1)\ra -4\pi \sum\limits_{j=1}^{N}\delta_{p_j},$  as $\l\ra \infty$.
	
\end{enumerate}
\end{theorem}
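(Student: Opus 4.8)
The plan is to trap the maximal solution $u_{\l}$ of (\ref{mfe}) between two explicit barriers that both tend to $-\bar u_0$ at the rate $O(1/\l)$; this is the graph counterpart of the mechanism behind Theorem \ref{tho}. Write $\hat u_{\l}=\bar u_0+u_{\l}$ and $g(t)=\me^{t}(\me^{t}-1)=\me^{2t}-\me^{t}$, so that (\ref{mfe}) reads $\D u_{\l}=\l g(\hat u_{\l})+\f{4\pi N}{\ve V\ve }$, and recall from (\ref{up1}) that $\D(-\bar u_0)=\f{4\pi N}{\ve V\ve }-4\pi\sum_{j=1}^{N}\d_{p_j}$.

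\emph{Upper barrier.} Since $g(0)=0$, the right-hand side of (\ref{mfe}) evaluated at $-\bar u_0$ equals $\f{4\pi N}{\ve V\ve }$, which dominates $\D(-\bar u_0)$; hence $-\bar u_0$ is a super-solution. Combined with the known fact that the maximal solution of the Chern--Simons equation satisfies $\hat u_{\l}<0$ on $V$ (see \cite{huang2020existence,hou2022existence}, and also recoverable by initializing the iteration scheme at $-\bar u_0$), this gives $u_{\l}\le -\bar u_0$.

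\emph{Lower barrier.} Put $\underline u_{\l}:=-\bar u_0-C/\l$ with $C>0$ to be fixed, so $\bar u_0+\underline u_{\l}=-C/\l$ and $\D\underline u_{\l}=\D(-\bar u_0)$. The sub-solution inequality $\D\underline u_{\l}\ge \l g(\bar u_0+\underline u_{\l})+\f{4\pi N}{\ve V\ve }$ reduces to $-4\pi\sum_{j=1}^{N}\d_{p_j}\ge \l g(-C/\l)$, which holds trivially at every vertex that is not some $p_j$ (the left side vanishes while $g(-C/\l)<0$), and at a vertex $p_j$ demands $\l g(-C/\l)\le -4\pi/\mu(p_j)$. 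Since $g(-s)=-s+\tfrac{3}{2}s^{2}+O(s^{3})\le -s/2$ for small $s>0$, we get $\l g(-C/\l)\le -C/2$ for all large $\l$, so fixing $C$ large (e.g. $C\mu(p_j)\ge 8\pi$ for every $j$) makes $\underline u_{\l}$ a sub-solution of (\ref{mfe}) for all large $\l$. As $\underline u_{\l}\le -\bar u_0$ and $-\bar u_0$ is a super-solution, the iteration scheme applied to this ordered pair yields a solution $w_{\l}$ with $\underline u_{\l}\le w_{\l}\le -\bar u_0$; since $u_{\l}$ is the largest solution, $u_{\l}\ge w_{\l}\ge \underline u_{\l}$.

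Combining the two barriers gives $-C/\l\le \hat u_{\l}\le 0$ on $V$, so $\hat u_{\l}\to 0$, i.e. $u_{\l}\to -\bar u_0$, which is part (1). Part (2) is then immediate from applying the linear (hence continuous) operator $\D$ to (\ref{mfe}): $\l\,\me^{\bar u_0+u_{\l}}(\me^{\bar u_0+u_{\l}}-1)=\D u_{\l}-\f{4\pi N}{\ve V\ve }\to \D(-\bar u_0)-\f{4\pi N}{\ve V\ve }=-4\pi\sum_{j=1}^{N}\d_{p_j}$, the limit being taken in the finite-dimensional space of functions on $V$. The only delicate point is the upper bound $u_{\l}\le -\bar u_0$ (equivalently $\me^{\hat u_{\l}}<1$): it is exactly the statement that the maximal branch does not overshoot the vacuum, and I would obtain it either by quoting the sign property of the maximal solution from \cite{huang2020existence,hou2022existence} or by checking that the monotone iteration defining $u_{\l}$ can be kept below the super-solution $-\bar u_0$; everything else is the elementary barrier computation above together with the comparison principle built into the iteration scheme.
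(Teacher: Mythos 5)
Your proof is correct and takes essentially the same approach as the paper: both arguments sandwich the maximal solution between the super-solution $-\bar u_0$ and an explicit sub-solution $-\bar u_0-c_\lambda$ with $c_\lambda\to 0$ (the paper chooses $c_\lambda=-\ln\bigl(\tfrac{1}{2}\bigl(1+\sqrt{1-16\pi N\eta/\lambda}\bigr)\bigr)$, you choose $C/\lambda$), invoking the monotone iteration and comparison lemma from \cite{huang2020existence}. Part (2) is read off from the equation in both cases; the paper does this by pairing with test functions and summing by parts, you by applying $\Delta$ directly, which is the same computation on a finite graph.
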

We arrange the rest of the paper as follows. In Section 2, we establish the  iteration scheme for any sub-solution and get the monotonic sequence  by the similar methods to those in \cite{hou2022existence, huang2020existence,huang2021vortex}. That implies that if (\ref{che}) has a sub-solution, then it admits a solution. We prove the existence of the solution by constructing the sub-solution. Furthermore, we develop a novel method to study the asymptotic  behaviors and finish the proof of Theorem \ref{tho}. In Section 3, we prove Theorem \ref{thw}.  For Eq.(\ref{mfe}), the behaviors of the solutions  as $\l$ goes to the critical value have been investigated in \cite{hou2022existence}. So far,  no one has considered the behaviors of the solutions to (\ref{mfe}) as $\l$ goes to infinity. We study their  behaviors by the same method as those in proving Theorem \ref{tho} and prove   Theorem \ref{tht} in Section 4.

\section{Proof of Theorem \ref{tho}}

Firstly, we reference a key maximum principle, denoted as Lemma 4.1, as outlined in \cite{huang2020existence}. For comprehensive understanding, we also provide a proof of this principle herein.
\begin{lemma}\label{map}
	Let $G=(V,E)$ be a finite graph.
	If there is a positive constant $K$ such that  $\D  u(x) -Ku(x) \geq 0$ for all $x\in V$, there holds $u\leq 0$ in $V$.
\end{lemma}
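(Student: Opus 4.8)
The plan is to run the standard discrete maximum-principle argument at a maximum point of $u$. Since $G=(V,E)$ is a finite graph, $V$ is a finite set, so $u$ attains its maximum at some vertex $x_0\in V$. At such a vertex one has $u(y)\leq u(x_0)$ for every neighbour $y\sim x_0$, and since the weights $\o_{xy}$ are positive and $\mu(x_0)>0$, every term of
\[
\D u(x_0)=\f{1}{\mu(x_0)}\sum_{y\sim x_0}\o_{x_0y}\bigl(u(y)-u(x_0)\bigr)
\]
is nonpositive, whence $\D u(x_0)\leq 0$.

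Next I would feed this into the hypothesis: evaluating $\D u(x)-Ku(x)\geq 0$ at $x=x_0$ gives $Ku(x_0)\leq \D u(x_0)\leq 0$, and since $K>0$ this forces $u(x_0)\leq 0$. As $x_0$ is a global maximum of $u$ on $V$, we conclude $u(x)\leq u(x_0)\leq 0$ for all $x\in V$, which is the assertion.

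There is essentially no obstacle here: the only places where the hypotheses are genuinely used are the finiteness of $G$ (so that the maximum is actually attained at a vertex — on an infinite graph one would instead have to impose boundedness or decay of $u$), the positivity of the edge weights $\o_{xy}$ and of the measure $\mu$ (so that $\D u(x_0)\leq 0$), and the strict positivity of $K$ (so that the sign of $u(x_0)$ can be read off from $Ku(x_0)\leq 0$). This is precisely the discrete counterpart of the classical fact that a subsolution of the operator $\D-K$ with $K>0$ cannot have a nonnegative interior maximum.
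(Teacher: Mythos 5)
Your argument is correct and complete: it is the standard discrete maximum-principle proof (evaluate at a global maximum $x_0$, note $\D u(x_0)\leq 0$ from positivity of the weights and the measure, then read off $u(x_0)\leq 0$ from $Ku(x_0)\leq \D u(x_0)$). The paper does not prove this lemma itself but cites it as Lemma 4.1 of \cite{huang2020existence}, where exactly this argument is used, so your proposal matches the intended proof.
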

\begin{proof} 

The proof unfolds via contradiction. Let's posit that $u>0$ at a certain point within $V$. Consequently, $u$ reaches its maximum at a point, denoted by $x_0$, where $u(x_0)>0$. This implies that
$$\D u(x_0) \geq Ku(x_0) >0.$$ 
However, according to the definition of the $\mu$-Laplace, it's established that $\D u(x_0)\leq 0$. This leads us to a contradiction. Therefore, it follows that $u\leq 0$ throughout $V$.
\end{proof}

Secondly, we  prove the following Lemma which will be used later. 

\begin{lemma}	\label{exs}
Suppose that $K$ is a positive constant. For any given $f\in W^{1,2}(V)$, the equation $\le(\D-K\ri)u=f$ has a solution $u$.
\end{lemma}
\begin{proof}

We adhere to the argument presented in the proof of Theorem 2.1 as delineated in \cite{ge2020p}. Our first step is to establish the functional
$$
	E(u)=\frac{1}{2} \int_V|\nabla u|^2 d \mu+\frac{1}{2} \int_V K u^2 d \mu+\int_V f u d \mu, \quad u \in W^{1,2}(V).
$$
The critical point of $E$ provides a solution for the equation $(\Delta - K)u=f$. We then contemplate the Euler-Lagrange equation associated with $E(u)$. Through our calculations, we ascertain
$$
	\left.\frac{d}{d t}\right|_{t=0} E(u+t \phi)=-\int_V\left(\Delta u-K u-f\right) \phi d \mu.
$$

This suggests that $\left.\frac{d}{d t}\right|_{t=0} E(u+t \phi)=0$ holds true if and only if $\Delta u -K u=f$. Next, we present that $E(u)$ approaches $+\infty$ as $\|u\|_{W^{1,2}(V)}$ also tends to $+\infty$. This signifies that $E(u)$ reaches its minimum within $W^{1,2}(V)$, which is a finite-dimensional linear space. Given the inequality
$$
	\left|\int_V f u d \mu\right| \leq\|f\|_2\|u\|_2\leq C_{f, G}\|u\|_{W^{1,2}(V)},
$$
we can then conclude
$$
\begin{aligned}
		 E(u) &\geq \frac{1}{2} \int_V|\nabla u|^2 d \mu+\frac{1}{2} \int_V K u^2 d \mu-C_{f, G}\|u\|_{W^{1,2}(V)} \\
		& \geq \f{\bar{K}}{2}\|u\|^2_{W^{1,2}(V)}-C_{f, G}\|u\|_{W^{1,2}(V)} \\
		& \rightarrow+\infty
		\end{aligned}
$$
as $\|u\|_{W^{1,2}(V)}\rightarrow+\infty$, where $\bar{K}=\min\{1,K\}$.  Supposing that $E(u)$ attains its minimum at some $u \in W^{1,2}(V)$, we then observe $\left.\frac{d}{d t}\right|_{t=0} E(u+t \phi)=0$, which implies $\Delta u-K u=f$.

\end{proof}

We call $(u_-,\up_-)$ a sub-solution to (\ref{che}) if it satisfies 
\be\label{chs}\left\{\begin{aligned}
	\Delta u_- &\geq \l f_1(u_-+u_0,\up_-+\up_0)+\f{4\pi N_1}{\ve V\ve },\\
	\Delta \up_-&\geq \l f_2(u_-+u_0,\up_-+\up_0)+\f{4\pi N_2}{\ve V\ve }.
\end{aligned}
\right.
\ee
\subsection{ Monotonic sequence} 
In order to get the existence of solutions to (\ref{che}), letting $(u_1,\up_1)=(-u_0,-\up_0)$, we first perform the following iteration scheme 
\be\label{ite}\left\{\begin{aligned}
	\le(\Delta-K\ri) u_{n+1} &= \l f_1(u_{n}+u_0,\up_{n}+\up_0)-Ku_{n}+\f{4\pi N_1}{\ve V\ve },\\
\le(\Delta-K\ri) \up_{n+1}&=\l f_2(u_{n}+u_0,\up_{n}+\up_0)-K\up_{n}+\f{4\pi N_2}{\ve V\ve },
\end{aligned}
\right.
\ee
where $K>2\l\le((a+b)(b-a)+2a^2\ri)$. 
\begin{lemma}\label{mol}

Let's assume that the pair $(u_-,\up_-)$ is a sub-solution to system (\ref{che}). Then we derive  that the sequence $(u_n,\up_n)$ is monotonic and has $(u_-,\up_-)$ as its lower bound, which can be expressed as:
\be\label{lmr}
\left\{\begin{aligned}
	u_1> u_2>\cdot\cdot\cdot> u_n>\cdot\cdot\cdot
	>u_{-},\\
	\up_1> \up_2>\cdot\cdot\cdot> \up_n>\cdot\cdot\cdot
	>\up_{-}.
\end{aligned}
\right.
\ee
Therefore, if system (\ref{che}) has a sub-solution, it will also admit a solution $(u_{\l}, \up_{\l})=\lim\limits_{n\ra \infty}(u_n,\up_n)$.

\end{lemma}
\begin{proof}
	
The proof follows from mathematical induction. Lemma \ref{exs} infers that for any given pair $(u_n,\up_n)$, there exists a subsequent pair $(u_{n+1},\up_{n+1})$ satisfying equation (\ref{ite}).

When $n=1$, given that $f_1(u_1+u_0,\up_1+\up_0)=f_1(0,0)=0$ and $f_2(u_1+u_0,\up_1+\up_0)=f_2(0,0)=0$, we arrive at the observation that:

\be\label{uvm}
\left\{\begin{aligned}
	\le(\Delta-K\ri)\le( u_2-u_1 \ri)&=4\pi\sum\limits_{j=1}^{k_1}m_j\delta_{p_j},\\
	\le(\Delta-K\ri)\le(\up_2-\up_1\ri)&=4\pi\sum\limits_{j=1}^{k_2}n_j\delta_{q_j}.
\end{aligned}
\right.
\ee
Applying Lemma \ref{map} to the first equation in (\ref{uvm}), we deduce that $u_2\leq u_1$. Noticing that $u_2-u_1$ only assumes a finite number of values, we establish the existence of some $x_0$ in $V$ such that $(u_2-u_1)(x_0)$ reaches its maximum. If $(u_2-u_1)(x_0)=0$, then (\ref{uvm}) implies:

\be
\D (u_2-u_1)(x_0)=4\pi\sum\limits_{j=1}^{k_1}m_j\delta_{p_j}\geq 0.
\ee
With respect to the definition of the $\mu-$Laplace, we arrive at $\D (u_2-u_1)(x_0)=0$, suggesting that $(u_2-u_1)(x)=0$ if $x\sim x_0$. Moreover, since $G$ is connected, for any $x\in V$, $(u_2-u_1)(x)=0$, which contradicts (\ref{uvm}). Therefore, we conclude that $u_1>u_2$. Likewise, $\up_1>\up_2$.

Assuming that $u_1> u_2>\cdot\cdot\cdot> u_n$ and $\up_1> \up_2>\cdot\cdot\cdot> \up_n$ for some $n$, it follows from the first equation in (\ref{ite}) that:
\bna\begin{aligned}
	(\D-K)(u_{n+1}-u_n)&= \l f_1(u_{n}+u_0,\up_{n}+\up_0)-\l f_1(u_{n-1}+u_0,\up_{n-1}+\up_0)-K(u_n-u_{n-1})\\
	&=\le(\l \f{\partial f_1}{\partial u}(\xi,\eta)-K\ri)(u_n-u_{n-1})+\l\f{\partial f_1}{\partial \up }(\xi,\eta)(\up_n-\up_{n-1}).\\
\end{aligned}
\ena
Here, $\xi$ is between $u_0+u_n$ and $u_0+u_{n-1}$, while $\eta$ is between $\up_0+\up_{n}$ and $\up_0+\up_{n-1}$. Taking into account:
\bna\begin{aligned}
	\l\f{\partial f_1}{\partial u}(\xi,\eta)&=\l\le(a(b-a)\me^{\xi}+2a^2\me^{2\xi}+b(b-a)\me^{\xi+\eta}\ri)\\
	&\leq \l\le(a(b-a)+2a^2+b(b-a)\ri)\\
	&=\l\le((a+b)(b-a))+2a^2\ri),
\end{aligned}
\ena
and
\bna\begin{aligned}
	\l\f{\partial f_1}{\partial \up }(\xi,\eta)&=\le(-b(b-a)\me^{\eta}-2ab\me^{2\eta}+b(b-a)\me^{\xi+\eta}\ri)\\
	&=\l\le(-2ab\me^{2\eta}-b(b-a)\me^{\eta}(1-\me^{\xi})\ri)<0,
\end{aligned}
\ena
we deduce:
\be\label{iti}
(\D-K)(u_{n+1}-u_n)>0.
\ee
Applying Lemma \ref{map} once again, we infer $u_{n+1}\leq u_n$. Following a similar process as in the case when $n=1$, we conclude $u_{n+1}<u_n$. Therefore, we have $u_1> u_2>\cdot\cdot\cdot> u_n>\cdot\cdot\cdot$. In a similar vein, we establish $\up_1> \up_2>\cdot\cdot\cdot> \up_n>\cdot\cdot\cdot$.

Next, we make a comparison between $(u_n, \up_n )$ and $(u_-, \up_-)$. Noting that $G$ only has a finite number of vertices, we can conclude that $x_u$ and $x_{\up}$ exist such that $ u_-+u_0$ and $\up_-+\up_0$ reach their maxima at $x_u$ and $x_{\up}$, respectively. Given equations (\ref{gfu}) and (\ref{chs}), along with the definition of the $\mu$-Laplace, we derive the following:

\begin{equation}\label{lsi}
	\begin{aligned}
		0\geq &\Delta  (u_--u_1)(x_u)\\
		\geq & f_1(u_{-}+u_0,\up_{-}+\up_0)(x_u)+4\pi\sum_{j=1}^{k_1}m_j\delta_{p_j}\\
		=&\lambda \left(a(b-a)\me ^{(u_--u_1)(x_u)}-b(b-a)\me^{(\up_--\up_1)(x_u)}+a^2\me^{2(u_--u_1)(x_u)}-ab\me^{2(\up_- -\up_1)(x_u)}\right.\\
		&+\left.b(b-a)\me^{(u_-+\up_--u_1-\up_1)(x_u)} \right) +4\pi\sum_{j=1}^{k_1}m_j\delta_{p_j}.
	\end{aligned}
\end{equation}
Assume $(u_--u_1)(x_u)>0$ and $(u_--u_1)(x_u)\geq (\up_--\up_1)(x_u)$. The right-hand side of equation (\ref{lsi}) can then be rewritten as

\begin{equation}\label{rwi}
	\begin{aligned}
		&\lambda \left((a(b-a)+ab\me^{(\up_--\up_1)(x_u)}+a^2\me^{(u_--u_1)(x_u)})(\me^{(u_--u_1)(x_u)}-\me^{(\up_- -\up_1)(x_u)})\right. \\
		&\left.+(b-a)^2\me^{(\up_- -\up_1)(x_u)}(\me^{(u_--u_1)(x_u)}-1)\right)+4\pi\sum_{j=1}^{k_1}m_j\delta_{p_j}>0,
	\end{aligned}
\end{equation}
which leads to a contradiction.

Assume $(u_--u_1)(x_u)>0$ and $(\up_--\up_1)(x_u)>(u_--u_1)(x_u)$. It is clear that $(\up_--\up_1)(x_{\up })\geq (u_--u_1)(x_u)>0$. Analogous to equation (\ref{lsi}), we obtain
\begin{equation}\label{lft}
	\begin{aligned}
		0\geq &\Delta  (\up_--\up_1)(x_{\up})\\
		\geq & \lambda f_2(u_{-}+u_0,\up_{-}+\up_0)(x_{\up})+4\pi\sum_{j=1}^{k_2}n_j\delta_{q_j}\\
		=&\lambda \left(-b(b-a)\me ^{(u_--u_1)(x_{\up })}+a(b-a)\me^{(\up_--\up_1)(x_{\up})}-ab\me^{2(u_--u_1)(x_{\up})}+a^2\me^{2(\up_- -\up_1)(x_{\up})}\right.\\
		&+\left.b(b-a)\me^{(u_-+\up_--u_1-\up_1)(x_{\up})} \right) +4\pi\sum_{j=1}^{k_2}n_j\delta_{q_j}. 
	\end{aligned}
\end{equation}
Again, the right-hand side of equation (\ref{lft}) is rewritten as
\begin{equation}
	\begin{aligned}
		&\lambda \left((a(b-a)+ab\me^{(u_--u_1)(x_{\up})}+a^2\me^{(\up_--\up_1)(x_u)})(\me^{(\up_--\up_1)(x_{\up})}-\me^{(u_- -u_1)(x_{\up})})\right. \\
		&\left.+(b-a)^2\me^{(u_- -u_1)(x_{\up})}(\me^{(\up_--\up_1)(x_{\up})}-1)\right)+4\pi\sum_{j=1}^{k_2}n_j\delta_{q_j}>0,
	\end{aligned}
\end{equation}
which results in a contradiction. Therefore, $(u_--u_1)(x_u)\leq 0$, and similarly, $(\up_--\up_1)(x_{\up})\leq 0$.

If $(u_--u_1)(x_u)=0$, using a similar technique to that in equations (\ref{lsi}) and (\ref{rwi}), we observe that

\begin{equation}
	0\geq \Delta  (u_--u_1)(x_u)\geq 4\pi\sum_{j=1}^{k_1}m_j\delta_{p_j}\geq 0.
\end{equation}
Thus, $(u_--u_1)(x)\equiv 0$ because $G$ is connected, leading to a contradiction. As a result, we have $u_-<u_1$, and similarly, $\up_-<\up_1$.

Now, let's assume that $u_n>u_-$ and $\up_n>\up_-$ for some $n$. According to equations (\ref{chs}) and (\ref{ite}), we obtain

\begin{equation}
	\begin{aligned}
		(\Delta-K)(u_{-}-u_{n+1})&\geq \lambda f_1(u_{-}+u_0,\up_{-}+\up_0)-\lambda f_1(u_{n}+u_0,\up_{n}+\up_0)-K(u_--u_{n})\\
		&=\left(\lambda \frac{\partial f_1}{\partial u}(\xi,\eta)-K\right)(u_--u_{n})+\lambda\frac{\partial f_1}{\partial \up }(\xi,\eta)(\up_--\up_{n}).\\
	\end{aligned}
\end{equation}
By applying a technique akin to the one used in proving equation (\ref{iti}), we deduce that

\begin{equation}
	(\Delta-K)(u_{-}-u_{n+1})>0.
\end{equation}
Using the same arguments as those in proving $u_{n+1}<u_n$, we demonstrate that $u_{n+1}>u_-$ and $\up_{n+1}>\up_-$. This completes the proof of Lemma \ref{mol}.

\end{proof}

\subsection{The sub-solution} 
Let $(u_{\l}, \up_{\l})$ be a solution to (\ref{che}) determined by Lemma \ref{mol}.  Noting that  a solution  to (\ref{che})  is also a sub-solution,  we  always  have $u_{\l}\geq u$ and $\up_{\l}\geq \up$ for any solution $(u,\up)$ to (\ref{che}). In this sense, we say that $(u_{\l}, \up_{\l})$ is a maximal solution. Next, we give  the existence of the sub-solution. 
\begin{lemma}
There exists $\l_0>0$ such that  the system (\ref{che}) has a sub-solution $(u_-, \up_-)$ for all $\l>\l_0$.
\end{lemma}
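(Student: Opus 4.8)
The plan is to build an explicit sub-solution as a constant shift of $(u_0,\up_0)$. First I would set
\[
u_-=c-u_0,\qquad \up_-=c-\up_0
\]
for a single fixed constant $c<0$ (any negative $c$ will do, e.g.\ $c=-1$). Using the \emph{same} constant in both components is the whole point: then $u_-+u_0=\up_-+\up_0\equiv c$, so $f_1$ and $f_2$ get evaluated on the diagonal, and a short computation --- invoking the symmetry $f_2(u,\up)=f_1(\up,u)$ and the exact cancellation of the $\me^u,\me^\up$ and $\me^{2u},\me^{2\up}$ contributions on the diagonal --- gives
\[
f_1(c,c)=f_2(c,c)=(b-a)^2\,\me^{c}\bigl(\me^{c}-1\bigr)=:-\ka_c ,
\]
with $\ka_c>0$ since $\me^{c}\in(0,1)$ for $c<0$ and $(b-a)^2>0$ by the standing hypothesis $b>a>0$. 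This identity is the only structural input about $f_1,f_2$ that is needed.

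Next I would substitute $(u_-,\up_-)$ into the sub-solution inequalities (\ref{chs}). Because $(u_0,\up_0)$ solves (\ref{gfu}) and $\D$ is linear with $\D(\text{const})=0$,
\begin{align*}
\D u_-&=-\D u_0=\f{4\pi N_1}{\ve V\ve}-4\pi\sum_{j=1}^{k_1}m_j\delta_{p_j},\\
\D \up_-&=-\D \up_0=\f{4\pi N_2}{\ve V\ve}-4\pi\sum_{j=1}^{k_2}n_j\delta_{q_j}.
\end{align*}
Inserting these, the $\f{4\pi N_1}{\ve V\ve}$ and $\f{4\pi N_2}{\ve V\ve}$ terms cancel and (\ref{chs}) becomes the pointwise conditions
\begin{align*}
\l\,\ka_c &\ge 4\pi\sum_{j=1}^{k_1}m_j\delta_{p_j}(x),\\
\l\,\ka_c &\ge 4\pi\sum_{j=1}^{k_2}n_j\delta_{q_j}(x),
\end{align*}
to hold for every $x\in V$. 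Since $V$ is finite, the right-hand sides are fixed bounded functions of $x$ (they vanish off $\{p_j\}$, resp.\ off $\{q_j\}$), so both hold as soon as
\[
\l>\l_0:=\f{4\pi}{\ka_c}\,\max\Bigl\{\max_{x\in V}\textstyle\sum_{j=1}^{k_1}m_j\delta_{p_j}(x),\ \ \max_{x\in V}\textstyle\sum_{j=1}^{k_2}n_j\delta_{q_j}(x)\Bigr\}.
\]
For such $\l$, $(u_-,\up_-)$ satisfies (\ref{chs}); Lemma \ref{mol} then shows that the iteration (\ref{ite}) started from $(u_1,\up_1)=(-u_0,-\up_0)$ converges to a (maximal) solution $(u_\l,\up_\l)$ of (\ref{che}), completing the proof of part (1) of Theorem \ref{tho}.

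The only step that really needs a moment's thought is the diagonal identity $f_1(c,c)=f_2(c,c)=(b-a)^2\me^{c}(\me^{c}-1)$; after that the verification of (\ref{chs}) is a one-line comparison that holds for all large $\l$, with no analytic difficulty. The two bookkeeping points to be careful about are: at a singular vertex $x=p_j$ the term $\D u_-(x)$ picks up the extra $-4\pi m_j\delta_{p_j}(p_j)<0$ (and symmetrically for the $\up$-equation at $x=q_j$), which only makes the required inequality easier; and one should double-check that $c<0$ together with $b>a>0$ indeed forces $\ka_c>0$ --- both are immediate. If one cared about the size of $\l_0$, optimizing over $c\in(-\infty,0)$ would lower it, but that is not needed.
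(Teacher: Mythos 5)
Your proof is correct and is essentially the paper's own argument: the paper takes the same constant diagonal shift $(u_-,\up_-)=(-u_0-c,-\up_0-c)$ with $c>0$ (your $c<0$ is their $-c$), uses the same diagonal identity $f_1=f_2=(b-a)^2(\me^{2c}-\me^{c})<0$, and concludes for $\l$ large enough, while you additionally make $\l_0$ explicit. One parenthetical remark is backwards --- the term $-4\pi\sum_{j}m_j\delta_{p_j}$ in $\D u_-$ makes the sub-solution inequality \emph{harder} at the vortex points (it is exactly what forces $\l>\l_0$), not easier --- but your displayed pointwise conditions and your formula for $\l_0$ already account for this correctly.
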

\begin{proof}

If $u=\up$, then $f_1(u,\up)$ and $f_2(u,\up)$ are simplified to $(a-b)^2(\me^{2u}-\me^{u})$. Let's set $c>0$ as a fixed constant, and define $(u_-,\up_- )=(-u_0-c,-\up_0-c)$. We observe that
\begin{equation}\label{suv}
	\left\{\begin{aligned}
		\Delta u_-&=\frac{4\pi N_1}{\ve V\ve}-4\pi\sum_{j=1}^{k_1}m_j\delta_{p_j},\\
		\Delta \up_- &=\frac{4\pi N_2}{\ve V\ve}-4\pi\sum_{j=1}^{k_2}n_j\delta_{q_j}.
	\end{aligned}
	\right.
\end{equation}
Given that
\[ f_1(u_-+u_0,\up_-+\up_0)=(a-b)^2(\me^{-2c}-\me^{-c}) \]
and
\[ f_2(u_-+u_0,\up_-+\up_0)=(a-b)^2(\me^{-2c}-\me^{-c}), \]
we conclude that
\begin{equation}\label{lso}
	\left\{\begin{aligned}
		\Delta u_-&\geq\lambda f_1(u_-+u_0,\up_-+\up_0 )+\frac{4\pi N_1}{\ve V\ve},\\
		\Delta \up_- &\geq \lambda f_2(u_-+u_0,\up_-+\up_0 )+\frac{4\pi N_2}{\ve V\ve},
	\end{aligned}
	\right.
\end{equation}
if $\lambda$ is large enough. Consequently, there exists $\lambda_0>0$ such that if $\lambda>\lambda_0$, $(u_-,\up_-)$ is a sub-solution to (\ref{che}).

\end{proof}
\subsection{Asymptotic  behavior}
\begin{lemma}\label{asb}
Let's consider $(u_{\l}, \up_{\l})$ as the maximal solution derived from Lemma \ref{mol}. With this assumption, the following conclusions hold true:

\begin{enumerate}[(1)]
	\item  As $\l\ra \infty$,  $(u_{\l}, \up_{\l })$ $\ra$ $(-u_0,-\up_0)$.
	
	\item  As $\l\ra \infty$, $\l f_1(u_{\l}+u_0,\up_{\l }+\up_0)$ $\ra$ $- 4\pi\sum\limits_{j=1}^{k_1}m_j\delta_{p_j}$ and  $\l f_2(u_{\l}+u_0,\up_{\l }+\up_0)$ $\ra$ $-4\pi\sum\limits_{j=1}^{k_2}n_j\delta_{q_j}$.
\end{enumerate}

\end{lemma}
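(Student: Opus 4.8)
The plan is to prove (1) by trapping $(u_\l,\up_\l)$ between the barrier $(-u_0,-\up_0)$ from above and a one-parameter family of sub-solutions lying just below it, and then to read off (2) directly from the equation (\ref{che}) together with the continuity of $\D$ on the finite graph.

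For the upper bound I would recall that, by Lemma \ref{mol}, $(u_\l,\up_\l)=\lim_{n\ra\infty}(u_n,\up_n)$ with $(u_1,\up_1)=(-u_0,-\up_0)$ and $u_1>u_2>\cdots$, so $u_\l\le -u_0$ and $\up_\l\le -\up_0$ on $V$. For the lower bound I would rerun the sub-solution construction of the previous subsection, but keeping the shift as a free parameter: for each fixed $c>0$ set $(u_-^c,\up_-^c)=(-u_0-c,-\up_0-c)$. Then $u_-^c+u_0=\up_-^c+\up_0=-c$, so $f_1(u_-^c+u_0,\up_-^c+\up_0)=f_2(u_-^c+u_0,\up_-^c+\up_0)=(a-b)^2(\me^{-2c}-\me^{-c})<0$, while $\D u_-^c=\frac{4\pi N_1}{\ve V\ve}-4\pi\sum_{j=1}^{k_1}m_j\delta_{p_j}$ and $\D\up_-^c=\frac{4\pi N_2}{\ve V\ve}-4\pi\sum_{j=1}^{k_2}n_j\delta_{q_j}$. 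Checking (\ref{chs}) vertex by vertex, the inequalities hold automatically off the singular set $\{p_j\}\cup\{q_j\}$ and reduce there to $\l(a-b)^2(\me^{-c}-\me^{-2c})\ge \frac{4\pi m_j}{\mu(p_j)}$, resp. $\ge\frac{4\pi n_j}{\mu(q_j)}$. Hence there is $\l_0(c)>0$, with $\l_0(c)\ra\infty$ as $c\ra 0^+$, such that $(u_-^c,\up_-^c)$ is a sub-solution of (\ref{che}) whenever $\l>\l_0(c)$, and then Lemma \ref{mol} gives $u_\l\ge -u_0-c$ and $\up_\l\ge -\up_0-c$.

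Combining the two bounds, for every $c>0$ one has $\ve u_\l+u_0\ve\le c$ and $\ve\up_\l+\up_0\ve\le c$ on $V$ as soon as $\l$ is large (larger than both the existence threshold and $\l_0(c)$); letting $c\ra 0^+$ proves (1), the convergence being vertexwise since $G$ is finite. For (2), I would rewrite (\ref{che}) as $\l f_1(u_\l+u_0,\up_\l+\up_0)=\D u_\l-\frac{4\pi N_1}{\ve V\ve}$ and $\l f_2(u_\l+u_0,\up_\l+\up_0)=\D\up_\l-\frac{4\pi N_2}{\ve V\ve}$. Since $V$ is finite, $\D$ is a bounded linear operator, so (1) gives $\D u_\l\ra\D(-u_0)$ and $\D\up_\l\ra\D(-\up_0)$; by (\ref{gfu}), $\D(-u_0)=\frac{4\pi N_1}{\ve V\ve}-4\pi\sum_{j=1}^{k_1}m_j\delta_{p_j}$ and $\D(-\up_0)=\frac{4\pi N_2}{\ve V\ve}-4\pi\sum_{j=1}^{k_2}n_j\delta_{q_j}$, whence $\l f_1(u_\l+u_0,\up_\l+\up_0)\ra -4\pi\sum_{j=1}^{k_1}m_j\delta_{p_j}$ and $\l f_2(u_\l+u_0,\up_\l+\up_0)\ra -4\pi\sum_{j=1}^{k_2}n_j\delta_{q_j}$.

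I expect the only genuine obstacle to be the lower bound: one has to notice that the sub-solution from the previous subsection can be chosen with $c$ arbitrarily small at the cost of enlarging $\l$, which is exactly the flexibility needed to pin $u_\l$ down to $-u_0$. After that, (1) is a routine squeeze and (2) is immediate from the equation and the continuity of the Laplacian on the finite graph.
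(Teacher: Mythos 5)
Your proof is correct and, for part (1), is essentially the paper's argument with the quantifiers rearranged: the paper fixes $\l$ and solves for the largest admissible shift $c(\l)=-\ln\bigl(\tfrac{1}{2}(1+\sqrt{1-\tfrac{16\pi N_2\eta}{\l (a-b)^2}})\bigr)$ explicitly, which yields the quantitative two-sided bound $-u_0+\ln(\cdot)\le u_\l<-u_0$, whereas you fix $c$ and enlarge $\l$, getting the same squeeze qualitatively; both rest on the identical barrier $(-u_0-c,-\up_0-c)$ and on Lemma \ref{mol} for the comparison with sub-solutions and for the upper bound $u_\l<-u_0$. For part (2) you genuinely deviate: the paper pairs the equation with an arbitrary test function $\phi$ and integrates by parts to identify the limit of $\l f_i$ distributionally, while you apply $\D$ directly to the limit in (1) and invoke the continuity of $\D$ on the finite-dimensional function space. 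On a finite graph these are equivalent, and your version is slightly more economical; the paper's formulation is the one that would survive on an infinite graph or in the continuum, where pointwise convergence would not immediately give convergence of the Laplacians.
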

\begin{proof}
Without loss of generality, let's assume that $N_2\geq N_1$. Let's define $\{\l_m\}$ as a sequence for which $\l_m\ra \infty$ as $m\ra \infty$. Also, let $\eta=\max\limits_{x\in V}\f{1}{\mu(x)}$, $\l_m>\f{16\pi N_2\eta}{(a-b)^2}$, and $(u_-,\up_- )=(-u_0-c,-\up_0-c)$ where $$c=-\ln \le(\f{1+\sqrt{1-\f{16\pi N_2\eta}{\l_m(a-b)^2}}}{2}\ri).$$

A straightforward verification indicates that

\be\left\{\begin{aligned}
	\Delta u_-&\geq\l_m f_1(u_-+u_0,\up_-+\up_0 )+\f{4\pi N_1}{\ve V\ve },\\
	\Delta \up_- &\geq \l_m f_2(u_-+u_0,\up_-+\up_0 )+\f{4\pi N_2}{\ve V\ve }.
\end{aligned}
\right.
\ee
Thus, $(u_-,\up_-)$ is a sub-solution to system (\ref{che}) with $\l=\l_m$, implying the existence of a solution $(u_{\l_m},\up_{\l_m})$ to (\ref{che}). By Lemma \ref{mol}, we have

$$-u_0+\ln \le(\f{1+\sqrt{1-\f{16\pi N_2\eta}{\l_m(a-b)^2}}}{2}\ri)\leq u_{\l_m}<-u_0$$
and
$$-\up_0+\ln \le(\f{1+\sqrt{1-\f{16\pi N_2\eta}{\l_m(a-b)^2}}}{2}\ri)\leq \up_{\l_m}<-\up_0. $$

From this, it is immediate that
$(u_{\l_m}, \up_{\l_m })\ra (-u_0,-\up_0)$ as $\l_m\ra \infty$.
Thus, we conclude $(u_{\l}, \up_{\l })\ra (-u_0,-\up_0)$ as $\l\ra \infty$, given that $\{\l_m\}$ may be any sequence for which $\l_m\ra \infty$.

Given any $\phi\in V^{\mathbb{R}}$, it can be deduced from (\ref{che}) that

\be\begin{aligned}
	\iv \l_m f_1(u_0+u_{\l_m}, \up_0+\up_{\l_m}) \phi d\mu&=\iv u_{\l_m} \D \phi d\mu-\f{4\pi N_1}{\ve V \ve }\iv \phi d\mu\\
	&\ra -\iv u_0 \D \phi d \mu-\f{4\pi N_1}{\ve V \ve }\iv \phi d\mu\\
	&=-\iv \D u_0 \phi  d\mu-\f{4\pi N_1}{\ve V \ve }\iv \phi d\mu\\
	&=-4\pi\sum\limits_{j=1}^{k_1}m_j\phi(p_j),
\end{aligned}
\ee
as $\l_m$ approaches infinity. Therefore,

$$\l_m f_1(u_0+u_{\l_m}, \up_0+\up_{\l_m}) \ra -4\pi\sum\limits_{j=1}^{k_1}m_j\delta_{p_j},$$ as $\l_m$ approaches infinity.
Similarly,

$$\l_m f_2(u_0+u_{\l_m}, \up_0+\up_{\l_m}) \ra -4\pi\sum\limits_{j=1}^{k_2}n_j\delta_{q_j},$$ as $\l_m$ approaches infinity.
Moreover,

$$\l f_1(u_{\l}+u_0,\up_{\l }+\up_0)\ra - 4\pi\sum\limits_{j=1}^{k_1}m_j\delta_{p_j}$$ and $$\l f_2(u_{\l}+u_0,\up_{\l }+\up_0)\ra  -4\pi\sum\limits_{j=1}^{k_2}n_j\delta_{q_j}, $$ as $\l\ra \infty$.

\end{proof}

\section {Proof of Theorem \ref{thw}}

  Assume that $N_2\geq N_1$. Let us consider a sequence $\{\l_m\}$ such that it approaches infinity, i.e., $\l_m\ra \infty$ as $m\ra \infty$. Furthermore, we assume that $\l_m>16\pi N_2\eta$ with $\eta=\max\limits_{x\in V}\f{1}{\mu(x)}$. Now, we define $(u_-,\up_- )=(-u_0-c,-\up_0-c)$ where
  
  $$
  c=-\ln \le(\f{1+\sqrt{1-\f{16\pi N_2\eta}{\l_m}}}{2}\ri).$$
  From this, we deduce

  \be\label{csi}\left\{\begin{aligned}
  	\Delta u_- &\geq \l_m \me^{\up_0+\up_-}(e^{u_0+u_-}-1)+\f{4\pi N_1}{\ve V\ve },\\
  	\Delta \up_- &\geq \l_m \me^{u_0+u_-}(e^{\up_0+\up_-}-1)+\f{4\pi N_2}{\ve V\ve },
  \end{aligned}
  \right.
  \ee
  which indicates that $(u_-,\up_- )$ is a sub-solution to (\ref{css}) at $\l=\l_m$. Referring to Lemma 3.2 in \cite{huang2021mean}, we can infer that (\ref{css}) admits a solution $(u_{\l_m }, \up_{\l_m})$ which is the maximal solution. Therefore, we find
  
 $$
  -u_0+\ln \le(\f{1+\sqrt{1-\f{16\pi N_2\eta}{\l_m}}}{2}\ri)\leq u_{\l_m}<-u_0$$
  and
  
 $$
  -\up_0+\ln \le(\f{1+\sqrt{1-\f{16\pi N_2\eta}{\l_m}}}{2}\ri)\leq \up_{\l_m}<-\up_0.$$
  As a result, as $\l_m\ra \infty$, $(u_{\l_m}, \up_{\l_m })$ converges to $(-u_0,-\up_0)$. Given that $\{\l_m\}$ can be any sequence with $\l_m\ra \infty$, we further conclude that $(u_{\l}, \up_{\l })$ converges to $(-u_0,-\up_0)$ as $\l\ra \infty$.
  
  For any $\phi\in V^{\mathbb{R}}$, using (\ref{css}), we can deduce:

  \be\begin{aligned}
  	\iv \l_m \me^{\up_0+\up_{\l_m}}(e^{u_0+u_{\l_m}}-1) \phi d\mu&=\iv u_{\l_m} \D \phi d\mu-\f{4\pi N_1}{\ve V \ve }\iv \phi d\mu\\
  	&\ra -\iv u_0 \D \phi d \mu-\f{4\pi N_1}{\ve V \ve }\iv \phi d\mu\\
  	&=-\iv \D u_0 \phi  d\mu-\f{4\pi N_1}{\ve V \ve }\iv \phi d\mu\\
  	&=-4\pi\sum\limits_{j=1}^{k_1}m_j\phi(p_j),
  \end{aligned}
  \ee
  as $\l_m\ra \infty$. Hence, we have
  
 $$
  \l_m \me^{\up_0+\up_{\l_m}}(e^{u_0+u_{\l_m}}-1)\ra -4\pi\sum\limits_{j=1}^{k_1}m_j\delta_{p_j}.$$
  Additionally, we observe that
  
$$
  \l_m \me^{u_0+u_{\l_m}}(e^{\up_0+\up_{\l_m}}-1)\ra -4\pi\sum\limits_{j=1}^{k_2}n_j\delta_{q_j},$$
  as $\l_m\ra \infty$. Hence, it implies
  
 $$
  \l \me^{\up_0+\up_{\l}}(e^{u_0+u_{\l}}-1)\ra -4\pi\sum\limits_{j=1}^{k_1}m_j\delta_{p_j}$$
  and
  
$$
  \l \me^{u_0+u_{\l}}(e^{\up_0+\up_{\l}}-1)\ra -4\pi\sum\limits_{j=1}^{k_2}n_j\delta_{q_j},$$
  as $\l\ra \infty$. This concludes the proof of Theorem \ref{thw}.

\section{Proof of Theorem \ref{tht}}
The methodology articulated in Lemma \ref{asb} can be effectively applied to the equation represented as

\be\label{CSH} \D u=\l \me^u(\me^u-1)+4\pi\sum\limits_{j=1}^{N}\delta_{p_j},\ee
which was previously explored in \cite{huang2020existence,hou2022existence}.

By invoking Lemma 4.2 from \cite{huang2020existence}, we ascertain that if Eq.(\ref{mfe}) possesses a sub-solution, it thereby admits a solution. Let's denote $u_{\l}$ as the maximal solution established by Lemma 4.2 in \cite{huang2020existence} with $u_-=-\bar{u}_0-c$.

We then define $\{\l_m\}$ as a sequence wherein $\l_m\ra \infty$ as $m\ra \infty$. Let $\l_m\geq 16\pi N\eta$ and $u_-=-\bar{u}_0-c$ where

$$c=-\ln\le( \f{1+\sqrt{1-\f{16\pi N\eta}{\l_m}}}{2}\ri) $$ and  $\eta=\max\limits_{x\in V}\f{1}{\mu(x)}$.
This leads us to

\be
\D u_-\geq \l_m \me^{\bar{u}_0+u_-}(\me^{\bar{u}_0+u_-}-1)+\f{4\pi N}{\ve V\ve }.
\ee
Applying Lemma 4.2 from \cite{huang2020existence}, we infer that

$$-\bar{u}_0+\ln\le( \f{1+\sqrt{1-\f{16\pi N\eta}{\l_m}}}{2}\ri)\leq u_{\l_m}<-\bar{u}_0.$$
Consequently, we deduce
$u_{\l_m}\ra -\bar{u}_0$ as $m\ra \infty$. Furthermore, $u_{\l}\ra -\bar{u}_0$ as $\l\ra \infty$.

For any $\phi\in V^{\mathbb{R}}$, it is discernible from (\ref{mfe}) that

\be\begin{aligned}
	\iv \l_m \me^{\bar{u}_0+u_{\l_m}}(\me^{\bar{u}_0+u_{\l_m}}-1) \phi d\mu&=\iv u_{\l_m} \D \phi d\mu-\f{4\pi N}{\ve V \ve }\iv \phi d\mu\\
	&\ra -\iv \bar{u}_0 \D \phi d \mu-\f{4\pi N}{\ve V \ve }\iv \phi d\mu\\
	&=-\iv \D \bar{u}_0 \phi  d\mu-\f{4\pi N}{\ve V \ve }\iv \phi d\mu\\
	&=-4\pi\sum\limits_{j=1}^{N}\phi(p_j),
\end{aligned}
\ee 
as $\l_m$ approaches infinity.
This leads us to the conclusion that

$$\l \me^{\bar{u}_0+u_{\l}}(\me^{\bar{u}_0+u_{\l}}-1)\ra -4\pi \sum\limits_{j=1}^{N}\delta_{p_j},$$  
as $\l\ra \infty$.
With this, we successfully conclude the proof of Theorem \ref{tht}.

\vskip 20 pt
\noindent{\bf Acknowledgement}

This work is  partially  supported by the National Natural Science Foundation of China (Grant No. 11721101), and by National Key Research and Development Project SQ2020YFA070080.

\vskip 12 pt
\noindent{\bf Competing Interests} 

The authors have no competing interests to declare that are relevant to the content of this article.

\vskip 12 pt
\noindent{\bf  Data Availability }
 
Data sharing not applicable to this article as no datasets were generated or analysed during the current study.

\vskip 20 pt

\bibliographystyle{plain}
\bibliography{D:/Reference/CHS.bib}

\end{document}